\newcommand{\Zz}{\mathbb{Z}}
\newcommand{\Cc}{\mathbb{C}}
\newcommand{\Pp}{\mathbb{P}}
\newcommand{\Qq}{\mathbb{Q}}
\newcommand{\Hom}{\operatorname{Hom}}
\newcommand{\la}{\langle}
\newcommand{\ra}{\rangle}
\newcommand{\ii}{{\mathrm i}}
\newcommand{\Oo}{\mathcal{O}}
\def\dim{\mathop{\mathrm{dim}}\nolimits}
\def\Hom{\mathop{\mathrm{Hom}}\nolimits}
\def\mod{\mathop{\mathrm{mod}}\nolimits}
\newtheorem*{rep@theorem}{\rep@title}
\newcommand{\newreptheorem}[2]{%
\newenvironment{rep#1}[1]{%
 \def\rep@title{#2 \ref{##1}}%
 \begin{rep@theorem}}%
 {\end{rep@theorem}}}
\newtheorem{thm-int}{Theorem}
\theoremstyle{definition}
\newtheorem{Def-s}[Thm]{Definition}
\newtheorem{theorem}{Theorem}[section]
\newtheorem{remark}[theorem]{Remark}
\numberwithin{equation}{section}
\begin{document}

\title{Research announcement: equations of a fake projective plane}

\begin{abstract}
In this short note we announce explicit equations of a fake projective plane in its bicanonical embedding in $\Cc\Pp^9$.
\end{abstract}

%\date{\today}

\author{Lev Borisov}
\address{Department of Mathematics\\
Rutgers University\\
Piscataway, NJ 08854} \email{borisov@math.rutgers.edu}
\author{JongHae Keum}
\address{School of Mathematics\\
KIAS\\
}
\email{jhkeum@kias.re.kr}

\maketitle

\section{Introduction}
Fake projective planes are defined as complex projective surfaces of general type whose Betti numbers equal that of 
the projective plane. These surfaces have been classified in \cite{CS} as 50 complex conjugate pairs of free quotients of the
two-dimensional complex ball by explicit arithmetic subgroups. Unfortunately, such description does not lead to explicit equations,
since it is not clear how one can explicitly construct the relevant modular forms.

\medskip
In this research announcement we describe explicit equations of one pair of such surfaces in its bicanonical embedding in $\Cc\Pp^9$.
The surfaces are defined over the imaginary quadratic field $\Qq[\sqrt {-7}]$. The explanation of the calculation 
will appear soon in a subsequent paper.
 
\medskip
Let $\Cc \Pp^9$ be a projective space with homogeneous coordinates denoted by $(U_0,U_1,\ldots,U_9)$.
Consider the non-abelian group $G$ of order $21$ which is a semi-direct product $\Zz/7\Zz$ and $\Zz/3\Zz$.
We define its action on $\Cc\Pp^9$ by its action on the homogeneous coordinates by
\begin{equation}\label{group_action}
\begin{array}{l}
g_7(U_0:U_1:U_2:U_3:U_4:U_5:U_6:U_7:U_8:U_9) :=
\\
 \hskip 50pt 
(U_0: \xi^6 U_1:\xi^5 U_2:\xi^3 U_3:\xi U_4:\xi^2 U_5:\xi^4 U_6: \xi U_7:\xi^2 U_8:\xi^4U_9)\\
g_3(U_0:U_1:U_2:U_3:U_4:U_5:U_6:U_7:U_8:U_9) :=
\\ \hskip 50pt 
(U_0:U_2:U_3:U_1:U_5:U_6:U_4:U_8:U_9:U_7) \\
\end{array}
\end{equation}
where $\xi=\exp({\frac {2\pi \ii}7})$ is the primitive seventh root of $1$.

\begin{theorem}\label{main}
Eighty four cubic equations of Tables \ref{eqs1-24} and \ref{eqs25-84}
 give equations of a fake projective plane $Z$ in $\Cc\Pp^9$ embedded by its bicanonical linear system.
\end{theorem}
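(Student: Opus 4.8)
The plan is to regard the eighty-four cubics as generators of a homogeneous ideal $I \subset S := \Cc[U_0,\dots,U_9]$ and to prove, mostly by explicit Gröbner basis computation, that $Z := \Proj(S/I)$ is a smooth projective surface whose invariants force it to be a fake projective plane embedded by $|2K|$. As a first, organizing step I would check that the span of the cubics is a $G$-subrepresentation of the space of degree-three forms; the weights in \eqref{group_action} are arranged so that $\Oo_Z(1)$ should carry the $G$-action expected of the bicanonical bundle, and this also fixes $\Qq[\sqrt{-7}]$ as the field of definition, with all coefficients lying in its ring of integers $\mathcal{O}$.

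Next I would compute a Gröbner basis of $I$. Since the coefficients lie in $\mathcal{O}$, the practical route is to reduce modulo a prime $p$ that splits in $\Qq[\sqrt{-7}]$ and compute over a finite field. The aim is to show that $R := S/I$ is Cohen--Macaulay of Krull dimension $3$ with Hilbert polynomial $18n^2-9n+1$: Cohen--Macaulayness guarantees that $I$ is saturated, the leading term gives $\deg Z = 36$, the constant term gives $\chi(\Oo_Z)=1$, and the linear term records $K_Z\cdot H = 18$ for $H=\Oo_Z(1)$. I would then verify by the Jacobian criterion that the reduction $Z_p$ is smooth of dimension two. If the Gröbner basis has unit leading coefficients over $\mathcal{O}[\tfrac1N]$ for a suitable $N$, the family over $\Spec\mathcal{O}[\tfrac1N]$ is flat with constant Hilbert polynomial, so smoothness of one closed fiber propagates to the generic fiber and yields a smooth, connected surface $Z/\Cc$ with the invariants above.

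To identify $Z$ as a fake projective plane I would compute the canonical module $\omega_R = \Ext^7_S(R,\,S(-10))$ from the minimal free resolution and read off $p_g = (\omega_R)_0$; showing $p_g = 0$ gives $q=0$ through $\chi(\Oo_Z)=1-q+p_g=1$, hence $b_1=0$. The decisive point is to exhibit an isomorphism $\omega_Z^{\otimes 2}\cong\Oo_Z(1)$, i.e.\ that the polarization is genuinely bicanonical. Granting this, $K_Z^2 = \tfrac14 H^2 = 9$, and since a square root of the ample bundle $\Oo_Z(1)$ is again ample, $K_Z$ is ample. Noether's formula then gives $c_2 = 12\chi(\Oo_Z) - K_Z^2 = 3$, and $\chi_{\mathrm{top}} = 2 - 2b_1 + b_2 = 3$ forces $b_2 = 1$: the Betti numbers are those of $\Cc\Pp^2$. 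Finally the Bogomolov--Miyaoka--Yau equality $K_Z^2 = 9 = 9\,\chi(\Oo_Z)$ certifies that $Z$ is a ball quotient of general type, so $Z$ is a fake projective plane.

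The main obstacle is the identification $\omega_Z^{\otimes 2}\cong\Oo_Z(1)$. The Hilbert-series symmetry $\chi(\omega_Z(n)) = \chi(\Oo_Z(-n))$ holds for every smooth surface by Serre duality and is therefore insufficient here; one must instead compute $\omega_R$ as a graded $R$-module and check that the multiplication pairing $\omega_R\otimes_R\omega_R \to R(1)$ is an isomorphism of sheaves on $Z$. A second, more technical difficulty is to turn the finite-field computations into a rigorous statement in characteristic zero: this requires certifying flatness of the family over $\mathcal{O}[\tfrac1N]$ and that the chosen prime is one of good reduction, so that no spurious components or singularities appear in passing between the fibers.
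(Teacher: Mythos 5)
Your overall architecture --- Hilbert polynomial giving $D^2=36$, $D\cdot K_Z=18$, $\chi(\Oo_Z)=1$; smoothness via the Jacobian criterion; a mod-$p$ free resolution plus semicontinuity to get $p_g=q=0$; Noether's formula to pin down the Betti numbers --- matches the paper's proof. The genuine gap is in the step you yourself single out as decisive: identifying $\Oo_Z(1)$ with $\omega_Z^{\otimes 2}$. First, the ``multiplication pairing $\omega_R\otimes_R\omega_R\to R(1)$'' you propose to check does not exist canonically: there is no natural map from $\omega_R^{\otimes 2}$ to $R(1)$ until one already has the isomorphism in hand, so what you would actually be testing is whether the rank-one sheaf $\omega_Z^{\otimes 2}\otimes\Oo_Z(-1)$ is trivial. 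Second, and fatally for your plan of certifying this over a finite field: semicontinuity transports \emph{vanishing} from characteristic $p$ to characteristic $0$, not \emph{existence}. From $\Hom\bigl(\Oo_{Z_p}(1),\omega_{Z_p}^{\otimes 2}\bigr)\neq 0$ in characteristic $p$ you cannot conclude that the corresponding Hom is nonzero over $\Cc$, since these dimensions may drop on the generic fiber. This is not a pedantic worry: fake projective planes have nontrivial torsion in their Picard groups, so $\omega_Z^{\otimes 2}\otimes\Oo_Z(-1)$ could a priori be a nontrivial torsion line bundle over $\Cc$ that becomes trivial after reduction, and nothing in your argument excludes this. Because in your ordering $K_Z^2=9$, ampleness of $K_Z$, general type, and hence the fake-projective-plane conclusion all hinge on this unproved isomorphism, the gap propagates through the entire proof.

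The paper resolves exactly this difficulty by arranging that every finite-field input is a vanishing statement. It first obtains $K_Z^2=9$ independently of any bicanonical identification, from $\chi(Z,\Oo(2K_Z))=10$ --- an Euler characteristic, hence constant in a flat family, so a mod-$263$ computation suffices --- and Noether's formula then already gives the Betti numbers of $\Cc\Pp^2$, while $D\cdot K_Z=18>0$ rules out $\Cc\Pp^2$ itself. Next, since $(2K_Z-D)^2=0$ and $(2K_Z-D)\cdot D=0$, the Hodge index theorem gives $2K_Z\equiv D$ numerically. The mod-$p$ computation $\Hom(\Oo(K_Z),\Oo(D))=0$ transfers to $\Cc$ by semicontinuity (vanishing goes the right way), giving $h^0(Z,\Oo(D-K_Z))=0$ and, by Serre duality, $h^2(Z,\Oo(2K_Z-D))=0$; Riemann--Roch for the numerically trivial class $2K_Z-D$ then forces $h^0(Z,\Oo(2K_Z-D))\geq\chi(\Oo_Z)=1$, and an effective numerically trivial divisor is zero, so $\Oo(2K_Z)\simeq\Oo(D)$. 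To repair your proof you should either substitute this vanishing-plus-Riemann--Roch argument for your direct isomorphism check, or carry out that check with exact coefficients over $\Qq[\sqrt{-7}]$ (as the paper in fact does for the smoothness verification, which also sidesteps the good-reduction concerns you raise there).
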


\begin{proof}
We used Magma to calculate the Hilbert series of $Z$ to give
$$
\dim H^0(Z,\Oo(k)) = \frac 12 (6k-1)(6k-2)
$$
for all $k\geq 0$. 

\medskip
We also verified that $Z$ is smooth. It is a somewhat delicate calculation because there are far too many minors of the Jacobian matrix and 
one needs to pick them carefully to show smoothness. Specifically, we used minors of the matrix of partial derivatives of the collections of seven
equations that are linearly independent to first order at the fixed points
$$
(U_0,\ldots,U_7,U_8,U_9)\in \{(0,\ldots,0,0,1),(0,\ldots,0,1,0),(0,\ldots,1,0,0)\}.
$$
This calculation was done in Magma with exact coefficients.

\medskip
Thus we have a smooth surface $Z$ and a very ample divisor class $D$ on it. We see that
$D^2=36$, $D K_Z=18$, and $\chi(Z,\Oo_Z)=1$. Note that this shows that $Z$ is not isomorphic to $\Cc\Pp^2$.

\medskip
We then used reduction modulo $263$ with $\ii\sqrt 7 = 16\mod 263$. We calculated (by Macaulay2) the projective resolution
of $\Oo_Z$ as 
$$
\begin{array}{c}
0\to
\Oo(-9)^{\oplus 28}\to
\Oo(-8)^{\oplus 189}\to
\Oo(-7)^{\oplus 540}\to\Oo(-6)^{\oplus 840}\to
\\
\to\Oo(-5)^{\oplus 756}\to\Oo(-4)^{\oplus 378}\to\Oo(-3)^{\oplus 84}\to\Oo\to\Oo_Z \to 0.
\end{array}
$$
By semicontinuity, the resolution is of the same shape over $\Cc$.
Since all of the sheaves $\Oo(-k),  ~k=3,\ldots,9$ are acyclic, we see that $h^1(Z,\Oo_Z)=h^2(Z,\Oo_Z)=0$ so we know all of the Hodge numbers of $Z$ other than $h^{1,1}(Z)$.

\medskip
To figure out this last Hodge number we used Macaulay to calculate $\chi(Z,\Oo(2K_Z))=10$ (again working modulo $263$).
This implies $K_Z^2=9$ and Noether's formula finishes the proof that $Z$ is a fake projective plane.

\medskip
We see that $2K$ is numerically equivalent to $D$.
We calculated 
$$\Hom(\Oo(K),\Oo(D))=0$$
by working modulo $263$ and semi-continuity.
This implies 
$$h^0(Z,\Oo(D-K))=0=h^2(Z,\Oo(2K-D)).$$
This implies that
$h^0(Z,\Oo(2K-D))\geq 1$, so $\Oo(2K)\simeq \Oo(D)$. So the fake projective plane $Z$ is embedded via a bicanonical embedding.
\end{proof}

\medskip
\begin{table}[htp]
\caption{Equations of the fake projective plane 1-24.}
\begin{center}
$$
\tiny{
\begin{array}{|cl|}
\hline
eq_1=&
(1 + \ii \sqrt{7}) U_1 U_2 U_3 
+
  (3 + \ii \sqrt{7}) U_4 U_5 U_6
+ 
8 (  U_1^2 U_5 + U_2^2 U_6+U_3^2 U_4 )
     \\[.8em]
     eq_2=&
     \frac{ (-1 + 3  \ii \sqrt{7})}4 
     U_0^3 
     + 
 \frac{(-7 + \ii \sqrt{7}) }2
 (2 U_1 U_2 U_3 + 8 U_4 U_5 U_6 - U_7 U_8 U_9)
  \\[.4em]&
     + (3 - \ii \sqrt{7}) U_0 ( 
   U_1 U_4 + U_2 U_5 + U_3 U_6) + 4 U_0 (U_1 U_7 + U_2 U_8 + U_3 U_9) 
        
         \\[.8em]
        eq_3=&
\frac {(11 - \ii \sqrt{7}) }{64} U_0^3 + 
\frac {(-1 - \ii \sqrt{7})}4 U_1 U_2 U_3 + 
  2 U_4 U_5 U_6 + \frac { (-9 - 5 \ii \sqrt{7})}{32} U_7 U_8 U_9 
  
   \\[.4em]&
  + 
  \frac{ (-7 - 3 \ii \sqrt{7}) }{32} U_0 ( U_1 U_7 + U_2 U_8 + U_3 U_9) + 
\frac{(1 + \ii \sqrt{7})}8 ( U_1^2 U_8 + U_2^2 U_9 + U_3^2 U_7) + (U_1 U_6^2 + 
    U_2 U_4^2 + U_3 U_5^2)   
      
      \\[.8em]
 eq_4=&
 \frac { (1 + \ii \sqrt{7})}8 U_0 U_3 (4 U_6 + U_9) - U_1 U_2 U_3 + 2 U_1^2 U_5 + 
 2 U_3 U_5^2 - 2 U_5 U_6  U_7 - U_5 U_7 U_9 - U_1 U_6 U_9
 
\\[.4em]

eq_5=& g_3(eq_4)
\\[.4em]
eq_6=&g_3^2(eq_4)

\\[.8em]
eq_7=&
(5 - \ii \sqrt{7}) U_1 U_2 U_3 + 16 U_4 U_5 U_6 + 
\frac {(-1 - \ii \sqrt{7})}2 U_0 U_1 U_7 - 
 2 U_0 U_2 (2 U_5 + U_8) + (1 - \ii \sqrt{7}) U_2 U_4 U_7 
 \\[.4em]&
 + (-1 +  \ii \sqrt{7} ) U_1^2 U_8 + 4 U_3 U_5 U_8 + (-2 + 2 \ii \sqrt{7} ) U_4 U_6 U_8 + 
(1 - \ii \sqrt{7}) U_6 U_7 U_8 + 2 U_3 U_8^2
 
 \\[.4em]

eq_8=& g_3(eq_7)
\\[.4em]
eq_9=&g_3^2(eq_7)

 \\[.8em]
eq_{10}=&
\frac{ (1 + 3 \ii \sqrt{7} ) }4 U_1 U_2 U_3 + \frac {(-5 + \ii \sqrt{7})}2 U_1^2 U_5 - 
 U_0 U_2 U_5 + (-1 + \ii \sqrt{7}) U_3 U_5^2 - 
 U_0 U_3 U_6 + (-5 + \ii \sqrt{7} ) U_4 U_5 U_6 
 
 \\[.4em]&
 + 
\frac { (3 - \ii \sqrt{7} )}4 U_0 U_1 U_7 - U_1^2 U_8 + 
\frac{ (-1 - \ii \sqrt{7})}8 U_0 U_2 U_8 + (1 + \ii \sqrt{7}) U_3 U_5 U_8 - 
 4 U_4 U_6 U_8  + \frac 12 (U_1 U_9^2)
 \\[.4em]&
 + \frac{(1 - \ii \sqrt{7}) }4 (2U_6 U_7 U_8+ 
 4U_5 U_7 U_9 + 4 U_5 U_6 U_7+  U_7 U_8 U_9  )+ 
\frac{ (3 + \ii \sqrt{7} )}4 U_3 U_8^2 - 2 U_4 U_5 U_9  + \frac{(-1 - \ii \sqrt{7})}2 U_4 U_8 U_9

\\[.4em]
eq_{11}=& g_3(eq_{10})
\\[.4em]eq_{12}=&g_3^2(eq_{10})
\\

eq_{13}=&-2 \ii \sqrt{7} U_1^2 U_3+\frac{ (-7+5 \ii \sqrt{7})}4 U_0 U_2 U_3+(-7+\ii \sqrt{7}) U_0 U_6^2+U_0^2 U_7+(2-2 \ii \sqrt{7}) U_1 U_4 U_7
\\[.4em]&
+(-5-\ii \sqrt{7}) U_2 U_5 U_7+(2+2 \ii \sqrt{7}) U_3 U_6 U_7+\frac{ (-1-5 \ii \sqrt{7})}4 U_1 U_7^2-2 U_2 U_7 U_8+\frac{ (3+3 \ii \sqrt{7})}2 U_3 U_7 U_9

\\[.8em]

eq_{14}=&U_1^2 U_3+\frac{ (3-\ii \sqrt{7})}4 U_0 U_1 U_5+2 U_3 U_4 U_6-2 U_5^2 U_6+\frac{ (1+\ii \sqrt{7})}4 U_2 U_5 U_7-U_3 U_6 U_7
\\[.4em]&
+\frac{ (-1-\ii \sqrt{7}) }4 U_3^2 U_8+\frac{ (-1+\ii \sqrt{7})}4 U_0 U_6 U_9+\frac{ (-5-\ii \sqrt{7})}8 U_3 U_7 U_9

\\[.8em]

eq_{15}=&\frac{ (-3-\ii \sqrt{7}) }2 U_1^2 U_3+\frac{ (3-\ii \sqrt{7}) }2 U_0 U_2 U_3+(-1+\ii \sqrt{7}) U_0 U_1 U_5+(-1-\ii \sqrt{7}) U_3^2 U_5
\\[.4em]&
+2 U_1 U_2 U_6+(1+\ii \sqrt{7}) U_0 U_6^2-U_0^2 U_7+\frac{(1+\ii \sqrt{7})}4 U_1 U_7^2+\frac{ (-1+\ii \sqrt{7})}2 U_0 U_1 U_8+U_3 U_7 U_9

\\[.8em]

eq_{16}=&(-3+\ii \sqrt{7}) U_2^3+(-3+\ii \sqrt{7}) U_1^2 U_3+4 U_0 U_2 U_3+(-2-2 \ii \sqrt{7}) U_0^2 U_4+8 U_1 U_4^2+8 U_0 U_1 U_5
\\[.4em]&
+(-5-\ii \sqrt{7}) U_1 U_2 U_6+(4+4 \ii \sqrt{7}) U_3 U_4 U_6+2 U_0 U_1 U_8+(3-\ii \sqrt{7}) U_2 U_7 U_8+(2+2 \ii \sqrt{7}) U_3 U_4 U_9

\\[.8em]

eq_{17}=&(-1-\ii \sqrt{7}) U_2^3+1/4 (5+\ii \sqrt{7}) U_0 U_2 U_3+(3-\ii \sqrt{7}) U_3^2 U_5+(4-4 \ii \sqrt{7}) U_2 U_4 U_5
\\[.4em]&
+(-1-\ii \sqrt{7}) U_2 U_5 U_7-2 U_1 U_2 U_9+(1+\ii \sqrt{7}) U_3 U_4 U_9-8 U_5^2 U_9-4 U_5 U_8 U_9

\\[.8em]

eq_{18}=&U_1^2 U_3+\frac{ (-5-\ii \sqrt{7})}8 U_0 U_2 U_3+\frac{ (1+\ii \sqrt{7})}2 U_3^2 U_5+\frac{ (1+\ii \sqrt{7}) }2 U_1 U_2 U_6
\\[.4em]&
+(-2+2 \ii \sqrt{7}) U_5^2 U_6+U_2 U_5 U_7-2 U_3 U_6 U_7+(-1+\ii \sqrt{7}) U_5 U_6 U_8-U_3 U_7 U_9

\\[.8em]

eq_{19}=&\frac{ (-5-\ii \sqrt{7})}2 U_0^2 U_4-4 U_2 U_5 U_7+\frac{(-1-\ii \sqrt{7})} U_1 U_7^2+2 U_0 U_1 U_8-2 U_2 U_7 U_8+\frac{ (-5+\ii \sqrt{7})}2 U_1 U_2 U_9
\\[.4em]&
+(1-\ii \sqrt{7}) U_3 U_4 U_9+(1-\ii \sqrt{7}) U_0 U_6 U_9+2 U_3 U_7 U_9+U_8^2 U_9+U_0 U_9^2

\\[.8em]

eq_{20}=&(1+\ii \sqrt{7}) U_1^2 U_3+1/2 (1-\ii \sqrt{7}) U_0 U_2 U_3-2 U_0^2 U_4+(-3-\ii \sqrt{7}) U_1 U_4^2-2 \ii \sqrt{7} U_0 U_1 U_5
\\[.4em]&
+(2-2 \ii \sqrt{7}) U_2 U_4 U_5+1/4 (5-\ii \sqrt{7}) U_0 U_1 U_8+1/2 (-5+\ii \sqrt{7}) U_3^2 U_8+4 U_5 U_8 U_9+2 U_8^2 U_9

\\[.8em]

eq_{21}=&(1-\ii \sqrt{7}) U_1^2 U_3-4 U_0 U_1 U_5-8 U_3 U_4 U_6-8 U_0 U_6^2+4 U_1 U_4 U_7+(2-2 \ii \sqrt{7}) U_2 U_5 U_7
\\[.4em]&
+2 U_1 U_7^2-2 U_0 U_1 U_8+(1+\ii \sqrt{7}) U_3^2 U_8+(1-\ii \sqrt{7}) U_2 U_7 U_8+(-1+\ii \sqrt{7}) U_3 U_7 U_9

\\[.8em]

eq_{22}=&-2 U_1^2 U_3+4 U_2 U_4 U_5-2 U_1 U_2 U_6+(1+\ii \sqrt{7}) U_0 U_6^2+\frac{ (1+\ii \sqrt{7})}4 U_0 U_1 U_8
\\[.4em]&

+2 U_2 U_4 U_8-2 U_5 U_6 U_8+U_1 U_2 U_9-2 U_3 U_4 U_9+\frac{ (1+\ii \sqrt{7}) }2U_0 U_6 U_9

\\[.8em]

eq_{23}=&\frac{ (-3+\ii \sqrt{7}) }4 U_2^3+\frac{ (-3+\ii \sqrt{7})}4 U_1^2 U_3+(-1-\ii \sqrt{7}) U_1 U_4^2+\frac{(-1+3 \ii \sqrt{7})}4 U_1 U_2 U_6
+\frac{ (-1-\ii \sqrt{7}) }2U_1 U_4 U_7
\\[.4em]&
+\frac{ (1+\ii \sqrt{7})}4 U_3^2 U_8+2 U_2 U_4 U_8+(-1+\ii \sqrt{7}) U_5 U_6 U_8+U_2 U_7 U_8+U_1 U_2 U_9

\\[.8em]

eq_{24}=&U_0 U_2 U_3+1/2 (-1-\ii \sqrt{7}) U_0^2 U_4+(1-\ii \sqrt{7}) U_0 U_1 U_5+(1-\ii \sqrt{7}) U_1 U_2 U_6+U_0 U_1 U_8
\\[.4em]&
-2 U_3^2 U_8-2 U_2 U_4 U_8+(1-\ii \sqrt{7}) U_5 U_6 U_8+2 U_5 U_8 U_9+U_8^2 U_9

\\
 \hline    
\end{array}
}
$$

\end{center}
\label{eqs1-24}
\end{table}

\begin{table}[htp]
\caption{Equations of the fake projective plane 25-84.}
\begin{center}
$$
\tiny{
\begin{array}{|cl|}
\hline
eq_{25}=&(-1+3 \ii \sqrt{7}) U_0^2 U_1+(44-4 \ii \sqrt{7}) U_2^2 U_3+64 U_3 U_4 U_5+(36-12 \ii \sqrt{7}) U_1 U_3 U_6+(16+16 \ii \sqrt{7}) U_4^2 U_6
\\[.4em]&
+(-4-4 \ii \sqrt{7}) U_0 U_2 U_7-32 U_3 U_4 U_8+(4+4 \ii \sqrt{7}) U_0 U_6 U_8-16 U_3 U_7 U_8+(8-8 \ii \sqrt{7}) U_1 U_3 U_9+16 U_4 U_7 U_9

\\[.8em]

eq_{26}=&(-1+3 \ii \sqrt{7}) U_0^2 U_1+(-4-4 \ii \sqrt{7}) U_2^2 U_3+(40-8 \ii \sqrt{7}) U_1 U_2 U_5+(4-12 \ii \sqrt{7}) U_1 U_3 U_6+96 U_4^2 U_6
\\[.4em]&
+(-24-8 \ii \sqrt{7}) U_2 U_6^2+16 U_1^2 U_7+(-2+2 \ii \sqrt{7}) U_0 U_2 U_7+64 U_4 U_6 U_7+(20-4 \ii \sqrt{7}) U_1 U_2 U_8-8 U_0 U_6 U_8
\\[.4em]&
+16 U_4 U_7 U_9

\\[.8em]

eq_{27}=&(5+\ii \sqrt{7}) U_0^2 U_1+(-4-4 \ii \sqrt{7}) U_2^2 U_3+(16-16 \ii \sqrt{7}) U_3 U_4 U_5+(-20-4 \ii \sqrt{7}) U_1 U_3 U_6+32 U_4^2 U_6
\\[.4em]&
+32 U_0 U_5 U_6+8 U_0 U_6 U_8-16 U_1 U_3 U_9+16 U_0 U_5 U_9+8 U_0 U_8 U_9

\\[.8em]

eq_{28}=&8 U_2^2 U_3+(-3+\ii \sqrt{7}) U_0 U_3^2+(-4-4 \ii \sqrt{7}) U_1 U_2 U_5+(4+4 \ii \sqrt{7}) U_3 U_4 U_5+32 U_5^3+(4+4 \ii \sqrt{7}) U_3 U_5 U_7
\\[.4em]&
+16 U_5^2 U_8+(3-\ii \sqrt{7}) U_1 U_3 U_9+8 U_2 U_6 U_9

\\[.8em]

eq_{29}=&(-3+\ii \sqrt{7}) U_2^2 U_3+(5+\ii \sqrt{7}) U_0 U_2 U_4+8 U_1 U_2 U_5-8 U_2 U_6^2+2 U_0 U_2 U_7+(-1-\ii \sqrt{7}) U_1 U_2 U_8+8 U_5^2 U_8
\\[.4em]&
+(3-\ii \sqrt{7}) U_1 U_3 U_9+(4+4 \ii \sqrt{7}) U_4^2 U_9-8 U_2 U_6 U_9+(2+2 \ii \sqrt{7}) U_4 U_7 U_9-2 U_0 U_8 U_9+(-3+\ii \sqrt{7}) U_2 U_9^2

\\[.8em]

eq_{30}=&8 U_2^2 U_3+(4-4 \ii \sqrt{7}) U_1^2 U_4+(-12-4 \ii \sqrt{7}) U_1 U_2 U_5+(-4-12 \ii \sqrt{7}) U_4^2 U_6+(12+4 \ii \sqrt{7}) U_2 U_6^2
\\[.4em]&
+(2-2 \ii \sqrt{7}) U_1^2 U_7-8 U_1 U_2 U_8-16 U_3 U_4 U_8+(1+3 \ii \sqrt{7}) U_0 U_6 U_8+(-3-\ii \sqrt{7}) U_3 U_7 U_8+4 U_1 U_3 U_9
\\[.4em]&
+(6+2 \ii \sqrt{7}) U_2 U_6 U_9

\\[.8em]

eq_{31}=&(-4+4 \ii \sqrt{7}) U_1^2 U_4-4 U_1 U_2 U_5+(-4+4 \ii \sqrt{7}) U_3 U_4 U_5+16 U_5^3+(-8+8 \ii \sqrt{7}) U_4^2 U_6+(2+2 \ii \sqrt{7}) U_0 U_5 U_6
\\[.4em]&
-4 U_1^2 U_7+(2+2 \ii \sqrt{7}) U_6 U_7^2+8 U_3 U_4 U_8-4 U_0 U_6 U_8-4 U_5 U_8^2+(1+\ii \sqrt{7}) U_7^2 U_9

\\[.8em]

eq_{32}=&(-5-\ii \sqrt{7}) U_0^2 U_1+(-6+2 \ii \sqrt{7}) U_0 U_3^2+(-24+8 \ii \sqrt{7}) U_3 U_4 U_5+(20+4 \ii \sqrt{7}) U_1 U_3 U_6-32 U_4^2 U_6
\\[.4em]&
-32 U_0 U_5 U_6+32 U_2 U_6^2+(2+2 \ii \sqrt{7}) U_0 U_2 U_7+(4+4 \ii \sqrt{7}) U_1 U_2 U_8-8 U_0 U_6 U_8+(10+2 \ii \sqrt{7}) U_1 U_3 U_9
\\[.4em]&
+16 U_2 U_6 U_9

\\[.8em]

eq_{33}=&(7-5 \ii \sqrt{7}) U_0^2 U_1+(-56-24 \ii \sqrt{7}) U_1^2 U_4+32 \ii \sqrt{7} U_1 U_2 U_5+(28+4 \ii \sqrt{7}) U_1 U_3 U_6+(28+28 \ii \sqrt{7}) U_0 U_5 U_6
\\[.4em]&
+(-84-4 \ii \sqrt{7}) U_1^2 U_7+(7+7 \ii \sqrt{7}) U_0 U_2 U_7-56 U_3 U_5 U_7+56 U_6 U_7^2+24 \ii \sqrt{7} U_1 U_2 U_8+56 U_0 U_6 U_8
\\[.4em]&
+(14-18 \ii \sqrt{7}) U_1 U_3 U_9+28 U_7^2 U_9

\\[.8em]

eq_{34}=&(-5-\ii \sqrt{7}) U_0^2 U_1+48 U_1 U_2 U_5+(-16-16 \ii \sqrt{7}) U_3 U_4 U_5+32 U_4^2 U_6+(2+10 \ii \sqrt{7}) U_1^2 U_7
\\[.4em]&
+(-48+16 \ii \sqrt{7}) U_4 U_6 U_7
+(28-4 \ii \sqrt{7}) U_1 U_2 U_8+(-12-12 \ii \sqrt{7}) U_3 U_4 U_8+(-16-8 \ii \sqrt{7}) U_0 U_6 U_8
\\[.4em]&
+(-22+2 \ii \sqrt{7}) U_1 U_3 U_9
+(-8-8 \ii \sqrt{7}) U_2 U_6 U_9+(-8+8 \ii \sqrt{7}) U_4 U_7 U_9

\\[.8em]

eq_{35}=&(10+2 \ii \sqrt{7}) U_2^2 U_3+(-11+\ii \sqrt{7}) U_0 U_2 U_4-16 U_1 U_2 U_5+(20+4 \ii \sqrt{7}) U_3 U_4 U_5-16 U_2 U_6^2
\\[.4em]&
+(-1-\ii \sqrt{7}) U_0 U_2 U_7
+(-2-2 \ii \sqrt{7}) U_1 U_2 U_8-16 U_5^2 U_8+(-4-4 \ii \sqrt{7}) U_4^2 U_9+(3-\ii \sqrt{7}) U_2 U_9^2

\\[.8em]

eq_{36}=&(2+2 \ii \sqrt{7}) U_0 U_3^2+(-6+2 \ii \sqrt{7}) U_0 U_2 U_4+(4-4 \ii \sqrt{7}) U_1 U_3 U_6+32 U_4^2 U_6+(-12-4 \ii \sqrt{7}) U_2 U_6^2+2 U_0 U_2 U_7
\\[.4em]&
+16 U_4 U_6 U_7+(7-\ii \sqrt{7}) U_1 U_2 U_8-8 U_5^2 U_8+4 U_1 U_3 U_9+(4-4 \ii \sqrt{7}) U_0 U_5 U_9+4 U_4 U_7 U_9-2 \ii \sqrt{7} U_0 U_8 U_9

\\[.8em]

eq_{k}=&g_3(eq_{k-24}),~~k=37,\ldots,60

\\[.8em]
eq_{k}=&g_3^2(eq_{k-48}),~~k=61,\ldots,84

    \\
\hline
\end{array}
}
$$
\end{center}
\label{eqs25-84}
\end{table}


\begin{thebibliography}{999}
\bibitem[CS]{CS}
D. Cartwright, T. Steger, \emph{Enumeration of the 50 fake projective planes}, Comptes Rendus Mathematique, Elsevier Masson SAS, 2010, 348 (1), p. 11-13.


\end{thebibliography}
\end{document}